\newtheorem{theorem}{\sc Theorem}[section]
\newtheorem{thm}[theorem]{\sc Theorem}
\newtheorem{lem}[theorem]{\sc Lemma}
\newtheorem{prop}[theorem]{\sc Proposition}
\newtheorem{cor}[theorem]{\sc Corollary}
\newtheorem{rem}[theorem]{\sc Remark}
\newtheorem*{thmA}{Theorem A}
\newtheorem*{thmB}{Theorem B}
\newtheorem*{thmC}{Corollary C}
\title[Non-abelian tensor product]{The order of the non-abelian tensor product of groups}
\author[Bastos]{R. Bastos}
\address{ Departamento de Matem\'atica, Universidade de Bras\'ilia,
Brasilia-DF, 70910-900 Brazil }
\email{(Bastos) bastos@mat.unb.br; (Rocco) norai@unb.br}
\author[Nakaoka]{I.\,N. Nakaoka}
\address{ Departamento de Matem\'atica, Universidade Estadual de Maring\'a, Maring\'a-PR, 87020-900 Brazil }
\email{(Nakaoka) innakaoka@uem.br}
\author[Rocco]{N.\,R. Rocco }
\subjclass[2010]{20E34, 20J06}
\keywords{Structure theorems; Finiteness conditions; Non-abelian tensor product of groups}
\begin{document}

\maketitle

\begin{abstract}
Let $G$ and $H$ be groups that act compatibly on each other. We denote by $[G,H]$ the derivative subgroup of $G$ under $H$. We prove that if the set $\{g^{-1}g^h \mid g \in G, h \in H\}$ has $m$ elements, then the derivative $[G,H]$ is finite with $m$-bounded order. Moreover, we show that if the set of all tensors $T_{\otimes}(G,H) = \{g\otimes h \mid g \in G, h\in H\}$ has $m$ elements, then the non-abelian tensor product $G \otimes H$ is finite with $m$-bounded order. We also examine some finiteness conditions for the non-abelian tensor square of groups. 
\end{abstract}

\maketitle

\section{Introduction}

Let $G$ and $H$ be groups each of which acts upon the other (on the right),
\[
G\times H \rightarrow G, \; (g,h) \mapsto g^h; \; \; H\times G \rightarrow
H, \; (h,g) \mapsto h^g
\]
and on itself by conjugation, in such a way that for all $g,g_1 \in G$ and
$h,h_1 \in H$,
\begin{equation}   \label{eq:0}
g^{\left( h^{g_1} \right) } = \left( \left( g^{g^{-1}_1}  \right) ^h \right) ^{g_1} \; \; \mbox{and} \; \; h^{\left( g^{h_1}\right) } =
\left( \left( h^{h_1^{-1}} \right) ^g \right) ^{h_1}.
\end{equation}
In this situation we say that $G$ and $H$ act {\em compatibly} on each other. The derivative of $G$ under (the action of) $H$, $[G,H]$, is defined to be the subgroup $[G,H] = \langle g^{-1}g^h \mid \ g \in G, h \in H\rangle$ of $G$. Similarly, the subgroup  $[H,G] = \langle h^{-1}h^g \mid \ h \in H, g \in G \rangle$ of $H$ is called derivative of $H$ under $G$. In particular, if $G=H$ and all actions are conjugations, then the derivative $[G,H]$ becomes the derived subgroup $G'$ of $G$. 

Schur \cite[10.1.4]{Rob} showed that if $G$ is central-by-finite, then the derived subgroup $G'$ is finite and thus, the group $G$ is a BFC-group. Neumann \cite[14.5.11]{Rob} improved Schur's theorem in a certain way, showing that the group $G$ is a BFC-group if and only if the derived subgroup $G'$ is finite, and this occurs if and only if $G$ contains only finitely many commutators. Latter, Wiegold proved a quantitative version of Neumann's result: if $G$ contains exactly $m$ commutators, then the order of the derived subgroup $G'$ is finite with $m$-bounded order \cite[Theorem 4.7]{W}. Now, the next result can be viewed as a version of Wiegold's result in the context of actions and derivatives subgroups $[G,H]$ and $[H,G]$, where $G$ and $H$ are groups acting compatibly on each other.

\begin{thmA}
Let $G$ and $H$ be groups that act compatibly on each other. Suppose that the set $\{g^{-1}g^h \mid g \in G, \ h \in H\} \subseteq [G,H]$ has exactly  $m$ elements. Then $[G,H]$ is finite, with $m$-bounded order.   
\end{thmA}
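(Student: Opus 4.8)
The plan is to realise the set $D:=\{g^{-1}g^{h}\mid g\in G,\ h\in H\}$ as a set of commutators. Form the semidirect product $\Gamma=G\rtimes H$ with respect to the action of $H$ on $G$, so that $H$ acts on the normal subgroup $G$ by conjugation and $g^{-1}g^{h}=[g,h]$ in $\Gamma$; thus $D=\{[g,h]\mid g\in G,\ h\in H\}$ and $[G,H]=\langle D\rangle$. The first point is that $D$ is closed under conjugation by $G$: rewriting \eqref{eq:0} as $(g^{h})^{g_{1}}=(g^{g_{1}})^{h^{g_{1}}}$ one gets $(g^{-1}g^{h})^{g_{1}}=(g^{g_{1}})^{-1}(g^{g_{1}})^{h^{g_{1}}}\in D$ for all $g_{1}\in G$. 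A direct computation also gives $(g^{-1}g^{h})^{h_{1}}=(g^{-1}g^{h_{1}})^{-1}(g^{-1}g^{hh_{1}})\in D^{-1}D$. Hence $E:=\bigcup_{d\in D}d^{\Gamma}\subseteq D^{-1}D$ is a $\Gamma$-invariant generating set of $[G,H]$ with $|E|\le m^{2}$; in particular $[G,H]$ is normal in $\Gamma$, and each element of $E$ has at most $m^{2}$ conjugates in $\Gamma$. Intersecting the centralizers of the elements of $E$ shows that $|\Gamma:C_{\Gamma}([G,H])|$, and so $|[G,H]:Z([G,H])|$, is $m$-bounded; by the quantitative version of Schur's theorem (Wiegold), $[G,H]'$ is finite of $m$-bounded order.

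Now I would pass to $\Gamma/[G,H]'$. Since $[G,H]'$ is characteristic in the normal subgroup $[G,H]$, it is normal in $\Gamma$, and this reduction does not increase $|D|$; so we may assume that $[G,H]$ is abelian. Since $[G,H]$ is generated by the $m$-element set $D$, it now suffices to bound the order of each $d\in D$: this bounds the exponent of $[G,H]$, and a finitely generated abelian group of $m$-bounded exponent on $m$ generators has $m$-bounded order. For each $h\in H$ the map $x\mapsto x^{-1}x^{h}$ is a homomorphism of $[G,H]$ into itself whose image lies in $D$; hence $C:=C_{[G,H]}(h)$ has index $L\le m$ in $[G,H]$, and $x^{L}\in C$ for every $x\in[G,H]$.

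Fix $d=g^{-1}g^{h}\in D$. From $g^{h^{k}}\in gD$ the set $\{g^{h^{k}}\mid k\ge 0\}$ has at most $m$ elements; putting $c_{k}:=g^{-1}g^{h^{k}}\in D$ gives the recursion $c_{k+1}=d\,c_{k}^{\,h}$, whose defining map $c\mapsto d\,c^{\,h}$ is a bijection of $[G,H]$. A sequence iterated under a bijection and taking only finitely many values is purely periodic, so $c_{p}=c_{0}=1$ for some $p\le m$, which unwinds (using commutativity) to $\prod_{j=0}^{p-1}d^{\,h^{j}}=1$. Raising this to the $L$-th power and using $(d^{L})^{h}=d^{L}$ (hence $(d^{L})^{h^{j}}=d^{L}$), the left side collapses to $(d^{L})^{p}=d^{Lp}$, so $d^{Lp}=1$ with $Lp\le m^{2}$. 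This gives the required $m$-bound on the order of $d$; undoing the reduction, $[G,H]$ is finite with $m$-bounded order.

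The steps I expect to be most delicate are: the correct manipulation of the iterated actions in \eqref{eq:0} to get that $D$ is invariant under $G$-conjugation, since this is the only place compatibility is used; and the final abelian estimate, whose crux is the idea to first reduce modulo $[G,H]'$ and then exploit that each centralizer $C_{[G,H]}(h)$ has $m$-bounded index together with the periodicity relation $\prod_{j}d^{\,h^{j}}=1$ forced by the coboundary nature of $D$. The argument also relies essentially on the quantitative (Schur--Wiegold) bound for the order of the derived subgroup of a group with central factor group of bounded order.
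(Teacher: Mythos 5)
Your proof is correct, and while its first half follows the same route as the paper, the treatment of the abelian case is genuinely different. Like the paper, you use compatibility only to get $(g^{-1}g^{h})^{g_1}=(g^{g_1})^{-1}(g^{g_1})^{h^{g_1}}\in D$, deduce that the generating set is covered by boundedly many boundedly small conjugacy classes, and then invoke the quantitative Schur--Wiegold bound to make $[G,H]'$ finite of $m$-bounded order and reduce to $[G,H]$ abelian; your packaging inside $\Gamma=G\rtimes H$ with the enlarged set $E\subseteq D^{-1}D$ is just a mild variant of the paper's observation that $D$ is a normal subset of $[G,H]$. After that the paper performs a second reduction: it shows via the identity $[[g,h],k]^{2}=[[g,h]^{2},k]\in D$ that $[[G,H],H]$ is finite of $m$-bounded order, passes to the case where $H$ acts trivially on $[G,H]$, and only then bounds the order of each generator using $[g,h]^{2}=[g,h^{2}]\in D$. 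You instead bound the order of each $d=g^{-1}g^{h}$ in one step: the coboundary map $x\mapsto x^{-1}x^{h}$ on the abelian, $H$-invariant group $[G,H]$ is a homomorphism with image inside $D$, so the fixed-point subgroup $C_{[G,H]}(h)$ has index $L\leq m$, and the pure periodicity of $c_{k}=g^{-1}g^{h^{k}}$ (valid because $c\mapsto dc^{h}$ is a bijection and the $c_{k}$ lie in $D$) yields $\prod_{j=0}^{p-1}d^{h^{j}}=1$ with $p\leq m$, whence $d^{Lp}=1$ and $Lp\leq m^{2}$. This avoids the intermediate step with $[[G,H],H]$, gives a cleaner explicit exponent bound $m^{2}$ on the generators (versus the paper's $2$-power coincidence argument), and has the further merit that after the reduction modulo $[G,H]'$ you no longer use the action of $G$ on $H$ at all, so you need not worry about whether compatibility survives the quotient; the paper's method, by contrast, keeps using both actions after its reductions. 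Both arguments end the same way: an abelian group on at most $m$ generators of $m$-bounded order is itself of $m$-bounded order, and undoing the Schur--Wiegold reduction finishes the proof.
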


It should be noted that the structure of derivative subgroups provides important information about the structure of the non-abelian tensor product of groups (see for instance \cite{BNR,Nak,NR01,Vis,T}). In this direction, we want to describe  quantitative results for the non-abelian tensor product of groups (cf. \cite{BNR}).

Let  $H^{\varphi}$ be
an extra copy of $H$, isomorphic via $\varphi : H \rightarrow
H^{\varphi}, \; h \mapsto h^{\varphi}$, for all $h\in H$.
Consider the group $\eta(G,H)$ defined in  \cite{Nak} as
$$\begin{array}{ll} {\eta}(G,H) =  \langle
G,H^{\varphi}\ |  &
[g,{h}^{\varphi}]^{g_1}=[{g}^{g_1},({h}^{g_1})^{\varphi}], \;
[g,{h}^{\varphi}]^{h^{\varphi}_1} = [{g}^{h_1},
({h}^{h_1})^{\varphi}] , \\ & \ \forall g,g_1 \in G, \; h, h_1 \in H
\rangle . \end{array}$$
We observe that when $G=H$ and all actions are conjugations, $\eta (G,H)$ becomes the group $\nu (G)$ introduced in \cite{NR1}: 
$$\begin{array}{ll} {\nu}(G) =  \langle
G \cup G^{\varphi}\ |  &
[g_1,{g_2}^{\varphi}]^{g_3}=[{g_1}^{g_3},({g_2}^{g_3})^{\varphi}] = 
[g_1,{g_2}^{\varphi}]^{g^{\varphi}_3}, \ g_i \in G
\rangle . \end{array}$$

It is a well known fact (see \cite[Proposition 2.2]{Nak}) that the subgroup
$[G, H^{\varphi}]$ of $\eta(G,H)$ is canonically isomorphic with the {\em non-abelian
tensor product} $G \otimes H$, as defined by Brown and Loday in their seminal paper \cite{BL}, the isomorphism being induced by $g \otimes h \mapsto
[g, h^{\varphi}]$ (see also Ellis and Leonard \cite{EL}). It is clear that the subgroup $[G,H^{\varphi}]$ is normal in $\eta(G,H)$ and one has the decomposition
\begin{equation} \label{eq:decomposition}
 \eta(G,H) = \left ( [G, H^{\varphi}] \cdot G \right ) \cdot H^{\varphi},
\end{equation}
where the dots mean (internal) semidirect products. For a deeper discussion of non-abelian tensor product and related constructions we refer the reader to \cite{K,NR}.

An element $\alpha \in \eta(G,H)$ is called a {\em tensor} if $\alpha = [a,b^{\varphi}]$ for suitable $a\in G$ and $b\in H$.  We write $T_{\otimes}(G, H)$ to denote the set of all tensors (in $\eta(G,H)$). When $G = H$ and all actions are by conjugation, we simply write $T_{\otimes}(G)$ instead of $T_{\otimes}(G,G)$. The influence of the set of tensors in the general structure of the non-abelian tensor product and related constructions was considered for instance in \cite{BNR,BR1,BR2,LT,NR2}. In \cite{BNR} the authors proved that if the set of all tensors $T_{\otimes}(G,H)$ is finite, then the non-abelian tensor product $[G,H^{\varphi}]$ is finite. Here we obtain the following quantitative version: 

\begin{thmB} 
Let $G$ and $H$ be groups that act compatibly on each other. Suppose that there exist exactly $m$ tensors in $\eta(G,H)$. Then the non-abelian tensor product $[G,H^{\varphi}]$ is finite with $m$-bounded order. 
\end{thmB}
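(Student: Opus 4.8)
The plan is to reduce Theorem~B to Theorem~A through the commutator (``crossed square'') homomorphisms carried by $\eta(G,H)$, and then to bound the part of the non-abelian tensor product not seen by those maps. The qualitative statement — finiteness of $[G,H^{\vfi}]$ — is already available from \cite{BNR}, so the real issue is the $m$-bound.

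First I would observe that $g\mapsto g$, $h^{\vfi}\mapsto h$ extends to a homomorphism $\eta(G,H)\to G\rtimes H$, the semidirect product for the action of $H$ on $G$; verifying the two families of defining relations of $\eta(G,H)$ here comes down exactly to the compatibility conditions \eqref{eq:0} together with the fact that we are dealing with genuine actions. Under this map the tensor $[g,h^{\vfi}]$ goes to $g^{-1}g^{h^{-1}}$, so the image of $T_{\otimes}(G,H)$ is precisely the generating set $\{g^{-1}g^{h}\mid g\in G,\ h\in H\}$ of $[G,H]$; hence that set has at most $m$ elements and Theorem~A gives that $[G,H]$ is finite of $m$-bounded order. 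Since $T_{\otimes}(G,H)$ is closed under inversion (because $g^{-1}\otimes h=((g\otimes h)^{-1})^{g^{-1}}$) and $\eta(G,H)\cong\eta(H,G)$, the symmetric construction shows $[H,G]$ is finite of $m$-bounded order too.

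Next I would work in $\eta(G,H)$ with $D:=[G,H^{\vfi}]=\langle T_{\otimes}(G,H)\rangle$. The defining relations say that conjugation by $G$ and by $H^{\vfi}$ permutes the $m$ tensors, so conjugation induces a homomorphism $\eta(G,H)\to\operatorname{Sym}\bigl(T_{\otimes}(G,H)\bigr)$ with kernel $C$ of index at most $m!$ centralizing $D$; hence $D\cap C\le Z(D)$, $[D:D\cap C]\le m!$, and so $D/Z(D)$ is finite of $m$-bounded order. By the quantitative form of Schur's theorem, $|D'|$ is then $m$-bounded, and since $D$ is generated by the $m$ tensors it remains only to bound the order of each tensor: $D/D'$ would then be an abelian group on $m$ generators of $m$-bounded order, hence finite of $m$-bounded order, and $|D|=|D'|\cdot|D/D'|$ would be $m$-bounded.

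Bounding the orders of the tensors is the crux, and it must combine the finiteness of $T_{\otimes}(G,H)$ with Paragraph~2 — the finiteness of $[G,H]$ and $[H,G]$ alone cannot suffice, since for trivial actions $G\otimes H=G^{\mathrm{ab}}\otimes_{\mathbb Z}H^{\mathrm{ab}}$ may be infinite (and then $T_{\otimes}(G,H)$ is infinite as well). The plan is to pass to $\bar G=G/[G,H]$ and $\bar H=H/[H,G]$, on which the induced actions are trivial, so $\bar G\otimes\bar H=\bar G^{\mathrm{ab}}\otimes_{\mathbb Z}\bar H^{\mathrm{ab}}$; there, for fixed $\bar g$ the map $\bar h\mapsto\bar g\otimes\bar h$ is an honest homomorphism whose image lies in the image of $T_{\otimes}(G,H)$, hence has size at most $m$, forcing $\bar g\otimes\bar h$ to have order at most $m$. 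Transporting this back requires bounding the kernel of the natural epimorphism $G\otimes H\to\bar G\otimes\bar H$, which by the Brown--Loday exact sequences is generated by the tensors $n\otimes h$ with $n\in[G,H]$ together with the mirror ones $g\otimes k$ with $k\in[H,G]$ — at most $m$ tensors in all — and one bounds the order of such an $n\otimes h$ using that $\langle n\rangle$ is finite of $m$-bounded order via the identity $1=n^{|n|}\otimes h=\prod_{j=0}^{|n|-1}(n\otimes h)^{n^{j}}$ and its analogue in the second variable. This last point — turning the finiteness of the derivatives $[G,H]$, $[H,G]$ and of $T_{\otimes}(G,H)$ into an explicit $m$-bound on the ``non-abelian multiplier'' part of $G\otimes H$ — is the main obstacle; granting it, the bounds from the three paragraphs assemble to show $[G,H^{\vfi}]$ is finite with $m$-bounded order.
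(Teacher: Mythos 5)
Your first two paragraphs are sound, and they in fact take a slightly different route from the paper: where you obtain a bounded-index subgroup centralizing $D=[G,H^{\varphi}]$ from the permutation representation of $\eta(G,H)$ on the $m$ tensors and then invoke quantitative Schur to bound $|D'|$, the paper gets its bounded-index \emph{central} subgroup directly as $\ker(\lambda)\cap\ker(\mu)$ (Proposition \ref{ident}, Lemma \ref{lem.quo}, Corollary \ref{cor.m-bound}). The genuine gap is in your third paragraph, i.e.\ exactly the step you yourself flag as ``the main obstacle''. Two problems there. First, the identity $1=n^{|n|}\otimes h=\prod_{j=0}^{|n|-1}(n\otimes h)^{n^{j}}$ writes $1$ as a product of \emph{conjugates} of $n\otimes h$, not as a power of it, so by itself it gives no bound on the order of $n\otimes h$; it only does so after you kill or control the conjugation action, e.g.\ by working modulo $D'$ (whose order you have bounded) and then reassembling. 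Second, even granting a bound on the orders of the at most $m$ generators of the kernel $K$ of $G\otimes H\to \bar G\otimes\bar H$, a group with $m$ generators of bounded order need not have bounded order, so ``bounding the kernel'' again requires routing the argument through the abelian quotient $KD'/D'$ and the bound on $|D'|$ — none of which is in your text. So the proposal as written is incomplete precisely at the step that carries the whole quantitative content; it is plausibly repairable along the lines just indicated (plus a citation or proof of the exact-sequence description of $K$), but it is not a proof yet.

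For comparison, the paper avoids bounding orders of individual tensors and avoids exact sequences altogether: since $[x,y^{\varphi}]^{n}\in\ker(\lambda)\cap\ker(\mu)\leqslant Z(\eta(G,H))$ with $n=|[G,H^{\varphi}]:\ker(\lambda)\cap\ker(\mu)|$ finite and $m$-bounded (Lemma \ref{lem.quo}, Corollary \ref{cor.m-bound}), one gets the rewriting identity $[x,y^{\varphi}]^{n+1}=[x,(y^{2})^{\varphi}][x^{y},y^{\varphi}]^{n-1}$ (Lemma \ref{lem.finite}); a pigeonhole argument on a shortest expression then shows every element of $[G,H^{\varphi}]$ is a product of at most $m\cdot n$ tensors, giving $|[G,H^{\varphi}]|\leqslant m^{m\cdot n}$ directly. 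If you want to salvage your approach, the cleanest fix is to import that centrality observation (or your own bounded-index centralizer) \emph{before} attempting to bound tensor orders, and to perform all order estimates modulo $D'$.
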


An immediate consequence of the above theorem is a quantitative version of the a well known result due to Ellis \cite{Ellis} concerning the finiteness of the non-abelian tensor product of finite groups (cf. \cite{BNR,LT,T}). See also Theorem \ref{cor.bound} and Remark \ref{rem.finite}, below. 

It is well known that the finiteness of the non-abelian tensor square $G \otimes G$, does not imply that $G$ is a finite group (and so, the group $\nu(G)$ cannot be finite). A useful result, due to Parvizi and Niroomand \cite[Theorem 3.1]{NP}, provides a sufficient condition: if $G$ is a finitely generated group in which the non-abelian tensor square is finite, then $G$ is finite (see also \cite[Remark 5]{NR2} for more details). The following result is a quantitative version of the above result and is a refinement of Theorem B in the context of the non-abelian tensor square of groups.

\begin{thmC}
Let $G$ be a group. Suppose that there exist exactly $m$ tensors in $\nu(G)$. Then, 
\begin{itemize}
\item[(a)] The non-abelian tensor square $[G,G^{\varphi}]$ is finite with $m$-bounded order. More specifically, $|[G,G^{\varphi}]| \leqslant m^{m \cdot n}$, where $n$ is the order of the derived subgroup $G'$; 
\item[(b)] Additionally, if the abelianization $G^{ab}$ is finitely generated, then the group $G$ is finite, with $m$-bounded order.   
\end{itemize}
\end{thmC}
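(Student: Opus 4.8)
The plan is to deduce both parts from Theorem B together with the decomposition $\eta(G,H) = ([G,H^{\varphi}]\cdot G)\cdot H^{\varphi}$ specialized to the case $H = G$ with conjugation actions, so that $\eta(G,G) = \nu(G)$ and the non-abelian tensor square is the normal subgroup $[G,G^{\varphi}]$. For part (a), Theorem B already gives that $[G,G^{\varphi}]$ is finite with $m$-bounded order, so what remains is to sharpen the bound to $m^{m\cdot n}$ with $n = |G'|$. First I would observe that $n$ is indeed $m$-bounded: since every commutator $[g_1,g_2]$ in $G$ is the image of the tensor $[g_1,g_2^{\varphi}]$ under the homomorphism $\nu(G)\to G$ (the retraction killing $G^{\varphi}$) that sends $[a,b^{\varphi}]\mapsto [a,b]$, the group $G$ has at most $m$ commutators, hence by Wiegold's theorem $G'$ is finite with $m$-bounded order. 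The key structural input is the well-known central series of $[G,G^{\varphi}]$ inside $\nu(G)$: writing $[[G,G^{\varphi}],G]$ (with $G$ acting by conjugation inside $\nu(G)$) one has that the quotient $[G,G^{\varphi}]/[[G,G^{\varphi}],G]$ maps onto (in fact is closely related to) $G'$ and is generated by the images of the tensors. To get the explicit exponent $m\cdot n$ I would argue that $[G,G^{\varphi}]$ is generated by the $m$ tensors, that the relevant nilpotent-like quotient has order dividing a power of $n$ controlled by the $m$ generators, and count: a subgroup generated by $m$ elements which is built from finitely many pieces each of size at most $n$ has order at most $m^{m\cdot n}$ after bounding the "depth" of the generating process by $m$. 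I would make this precise by induction on the derived/lower-central structure of $[G,G^{\varphi}]$, using at each stage that the set of tensors is invariant under the $G$- and $G^{\varphi}$-conjugation actions (this follows from the defining relations of $\nu(G)$) and that its image in each successive quotient has size at most $m$.

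For part (b), assume in addition that $G^{\mathrm{ab}} = G/G'$ is finitely generated. By part (a) the commutator subgroup $G'$ is finite, so $G$ is (finite)-by-(finitely generated abelian), hence finitely generated; thus $G$ is a finitely generated group whose tensor square $G\otimes G \cong [G,G^{\varphi}]$ is finite. Now I would invoke the theorem of Parvizi and Niroomand cited in the excerpt to conclude that $G$ is finite. To make the bound $m$-explicit, I would run the argument quantitatively instead: $G/G'$ is a finitely generated abelian group whose tensor square $G^{\mathrm{ab}}\otimes G^{\mathrm{ab}}$ is a quotient of $G\otimes G$ (via the natural map, since the tensor product of $G$ with $G$ surjects onto the abelian tensor product of the abelianizations), hence finite; but for a finitely generated abelian group $A$ of free rank $r$, the abelian tensor square $A\otimes_{\mathbb Z} A$ contains a copy of $\mathbb Z^{r^2}$, so finiteness forces $r = 0$, i.e. $G^{\mathrm{ab}}$ is finite, and one can bound $|G^{\mathrm{ab}}|$ in terms of $m$ because $|A\otimes A| = |A|^2$ for finite cyclic-free $A$ up to the usual formula, giving $|G^{\mathrm{ab}}|$ an $m$-bound. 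Combining $|G| = |G'|\cdot|G^{\mathrm{ab}}| \leqslant n\cdot|G^{\mathrm{ab}}|$ with the $m$-bounds on $n$ and on $|G^{\mathrm{ab}}|$ yields an $m$-bounded order for $G$.

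The main obstacle I anticipate is part (a): getting the clean explicit exponent $m^{m\cdot n}$ rather than merely "some function of $m$" requires carefully tracking how the $m$ tensors generate $[G,G^{\varphi}]$ through its natural filtration by the subgroups $[G,G^{\varphi},\underbrace{G,\dots,G}_{k}]$, showing each successive quotient is generated by at most $m$ elements of order bounded in terms of $n$, and bounding the length of the filtration by $m$ (or by $n$) — essentially a Wiegold-style counting argument transplanted into $\nu(G)$, where the compatibility relations guarantee the tensor set is closed under the ambient conjugation actions. The passage in (b) from "finitely generated with finite tensor square" to "finite" is, if one is willing to cite Parvizi–Niroomand, immediate; the only subtlety there is extracting an explicit $m$-bound, for which the free-rank argument on $G^{\mathrm{ab}}\otimes G^{\mathrm{ab}}$ above is the natural route.
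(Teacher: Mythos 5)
Part (b) of your proposal is essentially correct and takes a genuinely different route from the paper: the paper quotes Lemma \ref{lem.abelianization} (from \cite{BNR}) to embed $G^{ab}$ into the diagonal subgroup $\Delta(G)\leqslant [G,G^{\varphi}]$, whereas you use the natural epimorphism $[G,G^{\varphi}]\twoheadrightarrow G^{ab}\otimes_{\mathbb Z}G^{ab}$, kill the free rank, and bound $|G^{ab}|$ by $|G^{ab}\otimes_{\mathbb Z}G^{ab}|$; this does yield the $m$-bound without invoking Parvizi--Niroomand. One slip: your formula $|A\otimes_{\mathbb Z}A|=|A|^2$ for finite abelian $A$ is false (try $A=C_2\times C_4$); what you need, and what the gcd formula does give, is the inequality $|A|\leqslant |A\otimes_{\mathbb Z}A|$, so the argument survives after this correction.

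The genuine gap is in part (a), in the derivation of the explicit bound $|[G,G^{\varphi}]|\leqslant m^{m\cdot n}$ with $n=|G'|$. Your plan --- filter $[G,G^{\varphi}]$ by the subgroups $[G,G^{\varphi},G,\dots,G]$, claim each successive quotient is generated by at most $m$ elements of order controlled by $n$, and bound the depth of the filtration by $m$ --- is not an argument: $[G,G^{\varphi}]$ need not be nilpotent, so nothing bounds the length of that filtration, and the counting heuristic ``a subgroup generated by $m$ elements built from pieces of size at most $n$ has order at most $m^{m\cdot n}$'' is neither proved nor even plausibly tied to the exponent $m\cdot n$. The paper gets the bound by simply re-running the proof of Theorem~B inside $\nu(G)$ and identifying the relevant index: in the square case the two epimorphisms of Proposition \ref{ident}(b) coincide, since $\lambda([g,h^{\varphi}])=g^{-1}g^{h}=[g,h]=h^{-g}h=\mu([g,h^{\varphi}])$, so the central subgroup $\ker\lambda\cap\ker\mu=\ker\lambda$ has index exactly $n=|G'|$ in $[G,G^{\varphi}]$ (Lemma \ref{lem.quo}, Corollary \ref{cor.m-bound}); then Lemma \ref{lem.finite} gives $[x,y^{\varphi}]^{n+1}=[x,(y^2)^{\varphi}][x^{y},y^{\varphi}]^{n-1}$, normality of the tensor set lets one collect a tensor repeated $n+1$ times, and the minimal-length argument shows every element of $[G,G^{\varphi}]$ is a product of at most $m\cdot n$ tensors, whence $|[G,G^{\varphi}]|\leqslant m^{m\cdot n}$. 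The identification of the $n$ of Theorem~B with $|G'|$ is the whole content of the refinement, and it is exactly the step your proposal does not supply.
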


Note that the assumption of the abelianization $G^{ab}$ to be finitely generated is necessary. For instance, the Pr\"ufer group $C_{p^{\infty}}$ is an infinite group such that $T_{\otimes}(C_{p^{\infty}}) = \{1\} = [C_{p^{\infty}},C_{p^{\infty}}^{\varphi}]$. We also obtain a list of equivalent conditions related to the finiteness of the non-abelian tensor square and the structure of the group $\nu(G)$ (see Theorem \ref{thm.finiteness}, below). 

\section{Proofs}

The following result is a consequence of
\cite[Proposition 2.3]{BL}.

\begin{prop}  \label{ident}  Let $G$ and $H$  be groups acting compatibly on each other.  The following statements hold in $\eta(G,H)$:
\begin{itemize}
\item[(a)] There exists an action of the free product $G\ast H$ on $[G,H^{\varphi}]$ so that for all $g\in G$, $h\in H$, $p\in G\ast H$:
$$[g,h^{\varphi}]^p=[g^p, (h^p)^{\varphi}];$$
\item[(b)] There are epimorphisms of groups $$\lambda:[G,H^{\varphi}] \to  [G,H], \; \mu: [G,H^{\varphi}] \to [H,G]$$ such that $([g,h^{\varphi}])\lambda = g^{-1} g^h, \  ([g,h^{\varphi}])\mu=h^{-g}h$, for each $g\in G$, $h\in H$;
\item[(c)] The actions of $G$ on $\ker (\mu)$ and of $H$ on $\ker (\lambda)$ are trivial.
\end{itemize}
\end{prop}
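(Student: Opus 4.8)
The plan is to extract all three statements directly from the presentation of $\eta(G,H)$, the one recurring tool being that $[G,H^\varphi]$ is normal in $\eta(G,H)$: conjugating a generator $[g,h^\varphi]$ by an element of $G$ or of $H^\varphi$ returns a generator (after the substitution prescribed by the relations), and $G\cup H^\varphi$ generates $\eta(G,H)$, so $[G,H^\varphi]\trianglelefteq\eta(G,H)$. For (a), the subgroups $G$ and $H^\varphi\cong H$ then act on $[G,H^\varphi]$ by conjugation, and by the universal property of the free product these two actions combine into a single action of $G\ast H$ on $[G,H^\varphi]$; symmetrically $G\ast H$ acts on $G$ (by conjugation on the $G$-part, by the given action on the $H$-part) and on $H$. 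The two defining families of relations say precisely that $[g,h^\varphi]^{g_1}=[g^{g_1},(h^{g_1})^\varphi]$ and $[g,h^\varphi]^{h_1^\varphi}=[g^{h_1},(h^{h_1})^\varphi]$, and a one-line induction on the length of a word $p\in G\ast H$ promotes this to $[g,h^\varphi]^{p}=[g^{p},(h^{p})^\varphi]$.

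For (b) I would obtain $\lambda$ and $\mu$ as restrictions of honest homomorphisms out of $\eta(G,H)$. Let $\Gamma_{1}=G\rtimes H$ be the semidirect product in which conjugation realizes the prescribed action, $b^{-1}ab=a^{b}$ for $a\in G$, $b\in H$, so that $[a,b]=a^{-1}a^{b}$ holds in $\Gamma_1$; and let $\Gamma_{2}=H\rtimes G$ be the one with $a^{-1}ba=b^{a}$, so that $[a,b]=b^{-a}b$ in $\Gamma_2$. Define $\psi_1\colon\eta(G,H)\to\Gamma_1$ and $\psi_2\colon\eta(G,H)\to\Gamma_2$ on generators by $g\mapsto g$, $h^\varphi\mapsto h$. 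Verifying that the two relation families are respected is routine except for the terms involving the action of one factor on the other ($h^{g_1}$ for $\psi_1$, $g^{h_1}$ for $\psi_2$); this is exactly where the compatibility conditions \eqref{eq:0} are used, in the form $(g^{g_1})^{h^{g_1}}=(g^{h})^{g_1}$ and its transpose. Setting $\lambda:=\psi_1|_{[G,H^\varphi]}$ and $\mu:=\psi_2|_{[G,H^\varphi]}$, one reads off $([g,h^\varphi])\lambda=g^{-1}g^{h}$ and $([g,h^\varphi])\mu=h^{-g}h$; surjectivity onto $[G,H]$ and $[H,G]$ is clear because $[G,H^\varphi]$ is generated by the tensors, and, being restrictions of homomorphisms, $\lambda$ and $\mu$ are $G\ast H$-equivariant, so in particular $\ker\lambda$ and $\ker\mu$ are $G\ast H$-invariant subgroups of $[G,H^\varphi]$.

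For (c) — the substantive part — I would establish a Peiffer-type commutator identity inside $\eta(G,H)$ expressing, for $t\in[G,H^\varphi]$, $g\in G$ and $h\in H$,
\[
t^{-1}t^{g}=\bigl[g^{-1},((t)\mu)^{\varphi}\bigr],\qquad t^{-1}t^{h^{\varphi}}=\bigl[((t)\lambda)^{-1},h^{\varphi}\bigr]
\]
(up to a fixed choice of signs), i.e.\ saying that the obstruction to $g$, resp.\ $h^\varphi$, centralising $t$ is governed entirely by $(t)\mu$, resp.\ $(t)\lambda$. Granting these identities, $t\in\ker\mu$ gives $t^{-1}t^{g}=[g^{-1},1]=1$ for all $g$, so $G$ acts trivially on $\ker\mu$; likewise $H^\varphi$ centralises $\ker\lambda$. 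One proves the identities first for a single tensor $t=[a,b^\varphi]$, directly from the defining relations together with (a), and then propagates them to arbitrary products of tensors by a careful induction, the point being that the correction terms are controlled by $\mu$ (resp.\ $\lambda$), which is a homomorphism, and that conjugation by a tensor $t'$ on $[G,H^\varphi]$ coincides with the $G\ast H$-action of $(t')\mu$ (resp.\ $(t')\lambda$) — the Peiffer relation itself.

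I expect the genuine obstacle to be precisely this extension step in (c): fixing the exact shape of the correction terms — which tensors occur, the signs, and the order in which the compatibility substitutions \eqref{eq:0} are applied — and then keeping the order of the factors straight when passing from single tensors to general elements of $[G,H^\varphi]$. None of this is conceptually new: it is the content of \cite[Proposition 2.3]{BL}, and (b)–(c) can alternatively be deduced by transporting that result along the canonical isomorphism $G\otimes H\cong[G,H^\varphi]$ of \cite[Proposition 2.2]{Nak}, the compatibility \eqref{eq:0} serving in either route to make the $G$- and $H$-actions cohere into the crossed-module (indeed crossed-square) structure on $G\otimes H$.
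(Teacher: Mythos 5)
Your proposal is sound, but it is worth noting that the paper itself offers no argument at all for this proposition: it is stated as "a consequence of \cite[Proposition 2.3]{BL}", transported to $[G,H^{\varphi}]$ via the isomorphism $G\otimes H\cong[G,H^{\varphi}]$ of \cite[Proposition 2.2]{Nak} --- which is precisely the fallback you mention in your last paragraph. The bulk of what you write is therefore a genuinely different, more self-contained route: a direct derivation from the presentation of $\eta(G,H)$. Parts (a) and (b) of your sketch are complete and correct; in particular your verification that the homomorphisms $\psi_1:\eta(G,H)\to G\rtimes H$ and $\psi_2:\eta(G,H)\to H\rtimes G$ respect the defining relations is exactly where the compatibility conditions \eqref{eq:0} enter, and restricting them gives $\lambda$ and $\mu$ with the stated values on tensors. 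For (c), the identities you posit do hold and are the standard ones: for a single tensor they take the concrete form $[[a,b^{\varphi}],h^{\varphi}]=[[a,b],h^{\varphi}]$ (where $[a,b]:=a^{-1}a^{b}\in G$) together with its analogue for conjugation by $g\in G$ involving $b^{-a}b$, and the extension to arbitrary elements of $[G,H^{\varphi}]$ uses, as you say, that $\lambda,\mu$ are homomorphisms plus the Peiffer relation that conjugation by a tensor $[a,b^{\varphi}]$ agrees with the $G\ast H$-action of $a^{-1}a^{b}$ (equivalently of $b^{-a}b$); both facts are honest computations from the relations and \eqref{eq:0} (they appear in \cite{NR1} for $\nu(G)$ and in \cite{Nak} for $\eta(G,H)$), so your sketch does go through once those are written out, though as it stands the signs and exact correction terms in (c) are asserted rather than checked. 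What the two approaches buy: the citation route (the paper's) is immediate but homological in origin, while your presentation-based route is elementary and fits the paper's own stated preference for arguments carried out inside $\eta(G,H)$, at the cost of the commutator calculus you have correctly identified as the only real work.
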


The next lemma is an immediate consequence from the definition of $\eta(G,H)$ and Proposition \ref{ident}(c).

\begin{lem} \label{lem.quo}
If $G$ and $H$ are groups that act compatibly on each other, then 
$\ker(\mu) \cap \ker (\lambda)$ is a central subgroup of $\eta(G,H)$;  
\end{lem}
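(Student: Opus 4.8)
The plan is to prove the stronger assertion that $\ker(\mu)\cap\ker(\lambda)$ is contained in the center $Z(\eta(G,H))$, from which the stated conclusion follows at once. The starting point is the observation that, by its very presentation, $\eta(G,H)$ is generated by $G\cup H^{\varphi}$; hence it suffices to show that an arbitrary element $z\in\ker(\mu)\cap\ker(\lambda)$ commutes with every element of $G$ and with every element of $H^{\varphi}$, for then the centralizer $C_{\eta(G,H)}(z)$, being a subgroup containing $G\cup H^{\varphi}$, must be all of $\eta(G,H)$.

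To verify this I would first reconcile the abstract $G\ast H$-action of Proposition \ref{ident}(a) with conjugation inside $\eta(G,H)$: the defining relators of $\eta(G,H)$ say precisely that $[g,h^{\varphi}]^{g_1}=[g^{g_1},(h^{g_1})^{\varphi}]$ and $[g,h^{\varphi}]^{h_1^{\varphi}}=[g^{h_1},(h^{h_1})^{\varphi}]$, so on the normal subgroup $[G,H^{\varphi}]$ the action of the free factor $G$ coincides with conjugation by $G\leqslant\eta(G,H)$ and the action of the free factor $H$ coincides with conjugation by $H^{\varphi}\leqslant\eta(G,H)$. Since $\ker(\mu)$ and $\ker(\lambda)$ are subgroups of $[G,H^{\varphi}]$, Proposition \ref{ident}(c) therefore translates into the following: $z^{g}=z$ for all $g\in G$ whenever $z\in\ker(\mu)$, and $z^{h^{\varphi}}=z$ for all $h\in H$ whenever $z\in\ker(\lambda)$.

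Combining these two facts, any element $z\in\ker(\mu)\cap\ker(\lambda)$ is fixed under conjugation by every generator of $\eta(G,H)$, hence $z\in Z(\eta(G,H))$; as $z$ was arbitrary we obtain $\ker(\mu)\cap\ker(\lambda)\leqslant Z(\eta(G,H))$, which is in particular a central (and \emph{a fortiori} normal) subgroup of $\eta(G,H)$. There is essentially no obstacle here: the only point that needs a line of justification is the identification, in the second paragraph, of the $G\ast H$-action with honest conjugation in $\eta(G,H)$, and that is immediate from the relators in the presentation. This is precisely why the lemma can be described, as in the text, as an immediate consequence of the definition of $\eta(G,H)$ together with Proposition \ref{ident}(c).
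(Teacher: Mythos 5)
Your proof is correct and follows exactly the route the paper intends: the paper gives no separate argument, merely noting that the lemma is immediate from the defining relations of $\eta(G,H)$ together with Proposition \ref{ident}(c), which is precisely the identification of the $G\ast H$-action with conjugation and the resulting fact that any $z\in\ker(\mu)\cap\ker(\lambda)$ is centralized by the generating set $G\cup H^{\varphi}$.
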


For the reader's convenience we restate Theorem A. 

\begin{thmA}
Let $G$ and $H$ be groups that act compatibly on each other. Suppose that the set $\{g^{-1}g^h \mid g \in G, \ h \in H\} \subseteq [G,H]$ has exactly  $m$ elements. Then the derivative subgroup $[G,H]$ is finite with $m$-bounded order.   
\end{thmA}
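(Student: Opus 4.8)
The plan is to reduce the statement to Wiegold's classical theorem on groups with finitely many commutators, by manufacturing a single group in which the elements $g^{-1}g^h$ literally become commutators. The natural vehicle is the group $\eta(G,H)$, or rather a suitable quotient of it. Write $D = \{g^{-1}g^h \mid g\in G,\ h\in H\}$, so $|D| = m$ and $[G,H] = \langle D\rangle$. By Proposition \ref{ident}(b) the epimorphism $\lambda\colon [G,H^{\vfi}] \to [G,H]$ sends the tensor $[g,h^{\vfi}]$ to $g^{-1}g^h$; the trouble is that $[G,H^{\vfi}]$ need not be finite a priori, and $T_{\otimes}(G,H)$ need not be finite either, so we cannot simply invoke Theorem B. Instead I would work inside the semidirect product $\Gamma = [G,H^{\vfi}]\rtimes G$ sitting inside $\eta(G,H)$ via the decomposition \eqref{eq:decomposition}, and pass to the quotient $\overline{\Gamma} = \Gamma / \ker(\lambda)$ (note $\ker(\lambda)\trianglelefteq \eta(G,H)$, hence is normal in $\Gamma$). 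In $\overline{\Gamma}$ the image of $[G,H^{\vfi}]$ is exactly $[G,H]$, the image of $G$ is $G$, and for $g\in G$, $h\in H$ one computes $[g, h^{\vfi}]^{-1}\cdot g^{-1}(h^{\vfi})^{-1} g\, h^{\vfi} \in [G,H^{\vfi}]$ maps under $\lambda$ to the conjugate action; the key identity, already implicit in $\eta$, is that modulo $\ker(\lambda)$ the element $g^{-1}g^h$ equals the commutator $[g, h]$ where $h$ now acts through its image in $\overline{\Gamma}$. So in $\overline{\Gamma}$ every element of $D$ is a commutator.

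The next step is to bound the number of \emph{commutators} of $\overline{\Gamma}$ in terms of $m$, not just the number of elements of the special form $g^{-1}g^h$. This is the point where the compatibility relations \eqref{eq:0} and Proposition \ref{ident}(c) do the real work: one shows that the set $D$ is closed (up to the finitely many possibilities) under the operations needed to express an arbitrary commutator $[x,y]$ with $x\in [G,H]\rtimes H$ arising in $\overline\Gamma$. Concretely, using $[G,H] = \langle D\rangle$ one first argues that $[G,H]$ acts on itself with boundedly many orbits — indeed, for $d = g^{-1}g^h \in D$ and $h_1\in H$, the conjugate $d^{h_1}$ is again of the form ${g'}^{-1}{g'}^{h'}$ by \eqref{eq:0}, so $H$ permutes $D$, and likewise $G$ permutes the set $\{h^{-g}h\}$; combining these through the structure of $\overline\Gamma$ forces the commutator set of $\overline\Gamma$ to have $m$-bounded cardinality. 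Once that is in hand, Wiegold's theorem \cite[Theorem 4.7]{W} applies to $\overline\Gamma$ and gives that $\overline\Gamma' $ is finite with $m$-bounded order. Since $[G,H] \leqslant \overline{\Gamma}'$ (every generator $g^{-1}g^h$ is a commutator in $\overline\Gamma$, and these generate), we conclude $|[G,H]|$ is $m$-bounded.

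An alternative, possibly cleaner route avoids $\eta(G,H)$ entirely: form the holomorph-type group $\Delta = [G,H] \rtimes (\text{image of } H \text{ in } \mathrm{Aut}([G,H]))$. Since $H$ permutes the finite set $D$ which generates $[G,H]$, the image $\bar H$ of $H$ in $\mathrm{Aut}([G,H])$ permutes a finite generating set of size $m$, hence $\bar H$ embeds in the finite symmetric group on $m$ letters extended appropriately — in any case the commutators $[g, \bar h] = g^{-1}g^{\bar h}$ in $\Delta$ with $g\in [G,H]$ run exactly through $D$, of which there are $m$. One must still check that \emph{all} commutators of $\Delta$, including those of the form $[\bar h_1, \bar h_2]$ and mixed ones, number $m$-boundedly; this follows since $\bar H$ acts faithfully on an $m$-element set, so $|\bar H| \leqslant m!$, whence the total commutator count of $\Delta$ is at most $m\cdot m! + (m!)^2$ or so, certainly $m$-bounded. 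Then Wiegold gives $|\Delta'| $ $m$-bounded and $[G,H] = [[G,H], \bar H] \leqslant \Delta'$ finishes the proof.

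The main obstacle I anticipate is the bookkeeping in the second paragraph: verifying rigorously that passing to $\overline\Gamma$ (or building $\Delta$) really does turn the $m$ elements $g^{-1}g^h$ into genuine commutators and that no \emph{new} commutators outside an $m$-bounded set are created. The compatibility conditions \eqref{eq:0} are exactly what guarantee the actions assemble into a genuine group $\overline\Gamma$ (or $\Delta$) and that $H$ normalizes $D$; getting the quantitative bound on the full commutator set — rather than merely finiteness — is the delicate part, and is where the explicit combinatorics of how $H$ and $G$ permute $D$ and its mirror set must be pinned down.
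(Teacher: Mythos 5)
Both of your routes funnel the problem into one auxiliary group and then invoke Wiegold's theorem there, and in both cases the decisive step --- that the set of \emph{all} commutators of that auxiliary group is finite with $m$-bounded cardinality --- is asserted rather than proved, and is in fact where the argument breaks. For the first route: since $\ker(\lambda)\leqslant[G,H^{\varphi}]$ meets $G$ trivially, $\overline{\Gamma}=([G,H^{\varphi}]\cdot G)/\ker(\lambda)\cong[G,H]\rtimes G$ contains an isomorphic copy of $G$, so every commutator of $G$ is a commutator of $\overline{\Gamma}$, and these are not controlled by $m$ at all: taking $H=1$ and $G$ free of rank $2$ gives $m=1$ while $\overline{\Gamma}\cong G$ has infinitely many commutators, so Wiegold's theorem \cite{W} cannot be applied to $\overline{\Gamma}$. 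Moreover $\overline{\Gamma}$ contains no image of $H$ whatsoever, so it is not even true that the elements $g^{-1}g^{h}$ become commutators there; and any group large enough to exhibit all of them as commutators (for instance $G\rtimes H$) again contains all commutators of $G$ and of $H$, which the hypothesis does not bound. This is the structural reason the strategy ``make the $g^{-1}g^{h}$ into genuine commutators of one group and quote Wiegold'' cannot be completed as stated.

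The second route fails at two concrete points. First, the count ``at most $m\cdot m!+(m!)^{2}$'' omits the commutators $[x,y]$ with $x,y\in[G,H]$ and, more generally, all mixed commutators $[x\bar h_{1},y\bar h_{2}]$; bounding these is precisely the nontrivial content of the theorem (it amounts to showing that $\bigcap_{\delta\in D}C_{[G,H]}(\delta)=Z([G,H])$ has $m$-bounded index, using that compatibility makes $D$ a normal subset via $(g^{-1}g^{h})^{x}=(g^{x})^{-1}(g^{x})^{h^{x}}$), and your sketch never carries this out. Second, and fatally, the closing identity $[G,H]=[[G,H],\bar H]$ is false in general: the generators $g^{-1}g^{h}$ of $[G,H]$ involve arbitrary $g\in G$, whereas the commutators of $\Delta=[G,H]\rtimes\bar H$ that lie in $[G,H]$ only involve $x^{-1}x^{h}$ with $x\in[G,H]$ together with commutators of $[G,H]$ itself, so $\Delta'$ need not contain $[G,H]$. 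For example, let $G=C_{4}$, let $H=C_{2}$ act by inversion and let $G$ act trivially on $H$ (compatible since $G$ is abelian); then $[G,H]$ has order $2$, while $\bar H=1$, $\Delta\cong C_{2}$ and $\Delta'=1$. The paper avoids auxiliary groups altogether and argues inside $[G,H]$: normality of $D$ gives $Z([G,H])$ of index at most $m^{m}$ and Wiegold bounds $[G,H]'$, reducing to $[G,H]$ abelian; then $[[g,h],k]^{2}=[[g,h]^{2},k]\in D$ bounds $[[G,H],H]$, reducing to a trivial action of $H$ on $[G,H]$; finally $[g,h]^{2}=[g,h^{2}]\in D$ bounds the order of every generator. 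Some substitute for these reductions is unavoidable, and your proposal does not contain one.
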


\begin{proof}
Put $D=\{g^{-1}g^h \mid g \in G, \ h \in H\}$. For $g\in G$ and $h,k\in H$, let us write $[g,h]=g^{-1}g^h$ and $[g,h,k]=[[g,h],k]$. The compatibility of the actions gives us that $[g,h]^x=[g^x,h^x]$, for all $x,g \in G$ and $h \in H$. Thus, $D$ is a normal subset of $[G,H]$ and, as $|D|=m$, for each $\delta \in D$ we have $[[G,H]:C_{[G,H]}(\delta)]\leq m$. Consequently,  $\bigcap_{\delta \in D}C_{[G,H]}(\delta)$ has finite $m$-bounded index in $[G,H]$ and, by \cite[Theorem 4.7]{W}, the derived subgroup $[G,H]'$ is finite with  $m$-bounded order. Without loss of generality we may assume that $[G,H]$ is abelian. 
Since for all $x,g \in G$, $h,k \in H$, we have $ [[g,h],k]^x=[[g^x,h^x],k^x]$ 
and 
\[ [[g,h],k]^2=([g,h]^{-1}[g,h]^k)^2 =
[g,h]^{-2}[g,h]^{2k}= [[g,h]^2, k] \in D,
\]
we conclude that the abelian finitely generated subgroup $[[G,H],H]$ is normal in $G$ and each generator of this subgroup has $m$-bounded order. From this we deduce that $[[G,H],H]$ is finite with $m$-bounded order and we may assume that $H$ acts trivially on $[G,H]$. Hence, for all $g \in G$ and $h\in H$, 
\[ [g,h]^2=[g,h][g,h]^h=g^{-1}g^hg^{-h}g^{h^2}=[g,h^2] \in D.
\]
Since $|D|=m$, it follows that every element $[g,h]$ has finite $m$-bounded order. We conclude that the order of the derivative subgroup $[G,H]$ is $m$-bounded. The proof is complete.
\end{proof}

\begin{rem} \label{rem.derivatives}
Since $[G,H]$ and  $[H,G]$ are epimorphic images of the non-abelian tensor product $[G,H^{\varphi}]$, the finiteness of  $[G,H^{\varphi}]$ implies that $[G,H]$ and  $[H,G]$ are finite. However,  the converse does not hold in general. In fact, let $F_m$ and $F_n$ be free groups of finite ranks $m$ and $n$, respectively, where $m,n \geq 1$ and suppose that these groups act trivially on each other. Thus $[G,H]=\{1\}$ and  $[H,G]=\{1\}$ are finite, but by \cite[Proposition 2.4]{BL}, $[F_m, (F_n)^{\varphi}]\cong  (F_m)^{ab} \otimes_{\mathbb Z} (F_n)^{ab}$, which  is not finite. 
\end{rem}

Now we will deal with Theorem B: {\it Let $G$ and $H$ be groups that act compatibly on each other. Suppose that there exist exactly $m$ tensors in $\eta(G,H)$. Then the non-abelian tensor product $[G,H^{\varphi}]$ is finite with $m$-bounded order.}

\begin{cor} \label{cor.m-bound}
Let $G$ and $H$ be groups that act compatibly on each other. Suppose that the sets $\{g^{-1}g^h \mid g \in G, \ h \in H\} \subseteq [G,H]$ and $\{h^{-1}h^g \mid g \in G, \ h \in H\} \subseteq [H,G]$  have at most $m$ elements. Then the index $n = |[G,H^{\varphi}]: \ker(\lambda) \cap \ker(\mu)|$ is finite and $m$-bounded. 
\end{cor}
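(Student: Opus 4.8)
The plan is to exploit the two epimorphisms $\lambda\colon[G,H^{\varphi}]\to[G,H]$ and $\mu\colon[G,H^{\varphi}]\to[H,G]$ from Proposition \ref{ident}(b). The hypotheses say exactly that the images of the standard generating tensors $[g,h^{\varphi}]$ under $\lambda$ lie in a set of at most $m$ elements, and likewise under $\mu$. By Theorem A applied to each of the compatible actions (of $H$ on $G$, and of $G$ on $H$), both derivative subgroups $[G,H]$ and $[H,G]$ are finite with $m$-bounded order; write $|[G,H]|\le f(m)$ and $|[H,G]|\le f(m)$ for the bound produced there. Now consider the homomorphism
\[
\theta\colon [G,H^{\varphi}] \longrightarrow [G,H]\times[H,G], \qquad \theta(x)=(\lambda(x),\mu(x)).
\]
Its kernel is precisely $\ker(\lambda)\cap\ker(\mu)$, so $\theta$ induces an injection of $[G,H^{\varphi}]/(\ker(\lambda)\cap\ker(\mu))$ into $[G,H]\times[H,G]$. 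Hence
\[
n = \bigl|[G,H^{\varphi}] : \ker(\lambda)\cap\ker(\mu)\bigr| \le |[G,H]|\cdot|[H,G]| \le f(m)^2,
\]
which is finite and $m$-bounded, as required.

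The key steps, in order, are: (1) invoke Proposition \ref{ident}(b) to obtain $\lambda$ and $\mu$; (2) observe that the hypotheses translate, via $([g,h^{\varphi}])\lambda = g^{-1}g^h$ and $([g,h^{\varphi}])\mu = h^{-g}h$, into the statement that the generating sets of $[G,H]$ and $[H,G]$ described in Theorem A have at most $m$ elements (here one uses that $h^{-g}h = (h^{-1}h^{g})^{\,g^{-1}}$ ranges over a normal set of the same cardinality as $\{h^{-1}h^g\}$, or simply that the two sets are related by an obvious bijection); (3) apply Theorem A twice to bound $|[G,H]|$ and $|[H,G]|$; (4) form the diagonal map $\theta$ and read off the index bound.

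The only genuinely delicate point is step (2): one must check carefully that the hypothesis stated in terms of $\{g^{-1}g^h\}$ and $\{h^{-1}h^g\}$ really does match the input needed by Theorem A, and in particular that passing from $\mu$ (whose values are $h^{-g}h$) to the set $\{h^{-1}h^g\}$ does not change the relevant cardinality — this is immediate since conjugation by $g$ gives a bijection between $\{h^{-1}h^g \mid g\in G,h\in H\}$ and $\{h^{-g}h \mid g\in G,h\in H\}$. Everything else is a routine application of the first isomorphism theorem to the product map $\theta$, and the final bound $n\le f(m)^2$ depends only on $m$.
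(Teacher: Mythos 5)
Your proposal is correct and follows essentially the same route as the paper: apply Theorem A (to both actions) to make $[G,H]$ and $[H,G]$ finite of $m$-bounded order, then bound the index of $\ker(\lambda)\cap\ker(\mu)$ by $|[G,H]|\cdot|[H,G]|$; your diagonal map $\theta$ is just a formal packaging of the paper's observation that the index of the intersection of the two kernels is at most the product of their indices. The extra care you take in matching $h^{-g}h$ with $h^{-1}h^{g}$ is a fine (and harmless) clarification, not a different argument.
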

\begin{proof}
By Theorem A, both derivative subgroups $[G,H]$ and $[H,G]$ are finite groups with $m$-bounded orders. Since $|[G,H^{\varphi}]: \ker(\lambda)|=|[G,H]|$ and $|[G,H^{\varphi}]: \ker(\mu)|=|[H,G]|$, it follows that $\ker(\lambda) \cap \ker(\mu)$ has index at most $|[G,H]| \cdot |[H,G]|$. The proof is complete.  
\end{proof}

\begin{lem} \label{lem.finite}
Let $G$ and $H$ be groups that act compatibly on each other. Suppose that there are exactly $m$ tensors in $\eta(G,H)$. Then for every $x \in G$ and $y \in H$ we can write: $$[x,y^{\varphi}]^{n+1} = [x,(y^2)^{\varphi}][x^ y,y^{\varphi}]^{n-1},$$ where $n = |[G,H^{\varphi}]/(\ker(\mu)\cap \ker(\lambda))|$.
\end{lem}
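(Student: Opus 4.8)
The plan is to convert the claimed identity into a single congruence about $n$-th powers of tensors and then settle that congruence using the centrality of $K:=\ker(\mu)\cap\ker(\lambda)$ recorded in Lemma~\ref{lem.quo}.

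First I would isolate a ``key commutator identity'' in $\eta(G,H)$. Since $\varphi$ is a homomorphism we have $(y^2)^{\varphi}=y^{\varphi}y^{\varphi}$, so the elementary identity $[a,bc]=[a,c]\,[a,b]^{c}$ gives $[x,(y^2)^{\varphi}]=[x,y^{\varphi}]\,[x,y^{\varphi}]^{y^{\varphi}}$. Applying the defining relation $[g,h^{\varphi}]^{h_{1}^{\varphi}}=[g^{h_{1}},(h^{h_{1}})^{\varphi}]$ of $\eta(G,H)$ with $g=x$ and $h=h_{1}=y$, and using that conjugation of $y$ by itself inside $H$ is trivial ($y^{y}=y$), yields $[x,y^{\varphi}]^{y^{\varphi}}=[x^{y},y^{\varphi}]$, hence
$$[x,(y^2)^{\varphi}]=[x,y^{\varphi}]\,[x^{y},y^{\varphi}].$$
Consequently the right-hand side of the lemma equals $[x,y^{\varphi}]\,[x^{y},y^{\varphi}]\,[x^{y},y^{\varphi}]^{n-1}=[x,y^{\varphi}]\,[x^{y},y^{\varphi}]^{n}$, while the left-hand side is $[x,y^{\varphi}]\cdot[x,y^{\varphi}]^{n}$; so the assertion is equivalent to $[x,y^{\varphi}]^{n}=[x^{y},y^{\varphi}]^{n}$.

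It remains to prove this last equality. Set $t=[x,y^{\varphi}]\in[G,H^{\varphi}]$. By hypothesis (and Theorem~A together with Corollary~\ref{cor.m-bound}) the index $n=|[G,H^{\varphi}]:K|$ is finite, so $t^{n}\in K$, and $K$ is central in $\eta(G,H)$ by Lemma~\ref{lem.quo}; in particular $t^{n}$ commutes with $y^{\varphi}$. The computation above shows $[x^{y},y^{\varphi}]=t^{y^{\varphi}}$, so $[x^{y},y^{\varphi}]^{n}=(t^{y^{\varphi}})^{n}=(t^{n})^{y^{\varphi}}=t^{n}$, which is exactly what was needed. The only point deserving a little care — not a genuine obstacle — is to conjugate $t$ by $y^{\varphi}$ through the correct defining relation of $\eta(G,H)$ (the one with the $h^{\varphi}$-exponent, not Proposition~\ref{ident}(a)) and to remember $y^{y}=y$; once that is in place the argument is forced and short.
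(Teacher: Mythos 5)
Your proof is correct and rests on the same two pillars as the paper's: the centrality of $[x,y^{\varphi}]^{n}\in\ker(\mu)\cap\ker(\lambda)$ (via Theorem A, Corollary \ref{cor.m-bound} and Lemma \ref{lem.quo}) and the defining relation $[x,y^{\varphi}]^{y^{\varphi}}=[x^{y},y^{\varphi}]$. The only difference is bookkeeping: you reduce the identity to the equivalent statement $[x,y^{\varphi}]^{n}=[x^{y},y^{\varphi}]^{n}$ before invoking centrality, whereas the paper expands $[x,y^{\varphi}]^{n+1}$ directly and regroups.
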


\begin{proof} Since $|T_{\otimes}(G,H)|=m$, each of the sets $\{g^{-1}g^h \mid g \in G, h \in H\}$ and $\{h^{-1}h^g\mid g \in G, h \in H\}$ has at most $m$ elements. By Theorem A, the derivative subgroups $[G,H]$ and  $[H,G]$ are finite with $m$-bounded order. Moreover, the index $|[G,H^{\varphi}]: \ker(\mu) \cap \ker(\lambda)| = n$ is finite (Corollary \ref{cor.m-bound}). We conclude that for every $x,y \in G$ the element $[x,y^{\varphi}]^n \in \ker(\mu) \cap \ker(\lambda)$. Thus, by Lemma \ref{lem.quo},  $[x,y^{\varphi}]^n \in  Z(\eta(G,H))$ and so, $[x,y^{\varphi}]^{n+1} = x^{-1}(y^{-1})^{\varphi}x[x,y^{\varphi}]^{n}y^{\varphi}$. Further, 
\begin{eqnarray*}
[x,y^{\varphi}]^{n+1} & = & x^{-1}(y^{-1})^{\varphi}x[x,y^{\varphi}]^{n}y^{\varphi} \\
 & = & [x,(y^2)^{\varphi}] (y^{-1})^{\varphi} [x,y^{\varphi}]^{n-1}y^{\varphi} \\
 & = & [x,(y^2)^{\varphi}] ([x,y^{\varphi}]^{n-1})^{y^{\varphi}}\\
 & = & [x,(y^2)^{\varphi}] [x^y,y^{\varphi}]^{n-1}, \ \text{by definition of $\eta(G,H)$},
\end{eqnarray*}
which establishes the formula.
\end{proof}

We are now in a position to prove Theorem B.

\begin{proof}[Proof of Theorem B]
By Lemma \ref{lem.quo}, the subgroup $\ker(\mu) \cap \ker(\lambda)$ is a central subgroup of $\eta(G,H)$. Set $N=\ker(\mu) \cap \ker(\lambda)$ and $n = |[G,H^{\varphi}]/N|$. By Corollary \ref{cor.m-bound}, the index $n$  is $m$-bounded. We claim that every element in $[G,H^{\varphi}]$ can be written as a product of at most $\displaystyle{m \cdot n}$ tensors. Indeed, suppose that an element $\alpha \in [G,H^{\varphi}]$ can be expressed as a product of $r$ tensors but cannot be written as a product of fewer tensors. If $r> m \cdot n$, then one of the tensors must appear in the product at least $n+1$ times. In particular, since the set of tensors is normal and by definition of $\eta(G,H)$, $[g,h^{\varphi}]^{x} = [g^x,(h^x)^{\varphi}]$ and  $[g,h^{\varphi}]^{y^{\varphi}} = [g^y,(h^y)^{\varphi}]$, for all $g,x\in G$ and $h,y \in H$, we can write $$ \alpha = [a,b^{\varphi}]^{n+1}[a_{n+2},b_{n+2}^{\varphi}]\ldots [a_{r},b_{r}^{\varphi}],$$ where $a,a_{n+2},\ldots,a_r \in G$ and $b,b_{n+2},\ldots,b_r \in H$. By Lemma \ref{lem.finite}, $$[a,b^{\varphi}]^{n+1} = [a,(b^2)^{\varphi}][a^b,b^{\varphi}]^{n-1}.$$ It follows that $\alpha$ can be rewritten as a product of $r-1$ simple tensors, contrary to the minimality of $r$. From this we conclude that $r \leqslant m \cdot n$. Now, since there exists at most $m$ simple tensors, we conclude that $\vert [G,H^{\varphi}] \vert \leqslant m^{m\cdot n}$, as well. In particular, $[G,H^{\varphi}]$ is finite with $m$-bounded order. The proof is complete. 
\end{proof}

In \cite{M}, Moravec proved that if $G$ and $H$ are locally finite groups of finite exponent  acting compatibly on each other, then there is a bound to the exponent of the non-abelian tensor product $G \otimes H$ in terms of the exponent of the involved groups. This bound depends to the positive solution of the restricted Burnside problem (Zel'manov, \cite{ze1,ze2}). Using the general description of the group $\eta(G,H)$ we present an explicit bound to the exponent of the non-abelian tensor product of groups, when $G$ and $H$ are finite groups. Moreover, we present another proof of Ellis' result \cite{Ellis}.  

\begin{thm} \label{cor.bound}
Let $G$ and $H$ be finite groups that act compatibly on each other. Then the non-abelian tensor product $[G,H^{\varphi}]$ is finite. Moreover, the exponent $\exp([G,H^{\varphi}])$ is finite and $\{|G|,|H|\}$-bounded.   
\end{thm}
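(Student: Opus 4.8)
The plan is to deduce both assertions from Theorem B together with the semidirect decomposition \eqref{eq:decomposition}. First I would note that every tensor of $\eta(G,H)$ is of the form $[a,b^{\varphi}]$ with $a\in G$, $b\in H$, so $|T_{\otimes}(G,H)|\leqslant|G|\cdot|H|$ is finite; hence Theorem B applies with $m\leqslant|G|\cdot|H|$ and gives that $[G,H^{\varphi}]$ is finite with $\{|G|,|H|\}$-bounded order. If one wants this to be fully explicit, one combines the estimate $|[G,H^{\varphi}]|\leqslant m^{m\cdot n}$ from the proof of Theorem B with $n=|[G,H^{\varphi}]:\ker(\lambda)\cap\ker(\mu)|\leqslant|[G,H]|\cdot|[H,G]|\leqslant|G|\cdot|H|$ (Corollary \ref{cor.m-bound}), obtaining $|[G,H^{\varphi}]|\leqslant(|G||H|)^{(|G||H|)^{2}}$. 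Since Theorems A and B are established without any appeal to Ellis' theorem, this argument also reproves the finiteness of the non-abelian tensor product of two finite groups.

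For the exponent, I would then invoke \eqref{eq:decomposition}: as $[G,H^{\varphi}]$ is now a finite normal subgroup of $\eta(G,H)$ and the remaining two factors $G$ and $H^{\varphi}$ are finite, the group $\eta(G,H)$ itself is finite, with $|\eta(G,H)|=|[G,H^{\varphi}]|\cdot|G|\cdot|H|$ because the decomposition consists of internal semidirect products. Consequently $\exp([G,H^{\varphi}])$ divides $|[G,H^{\varphi}]|$, which is $\{|G|,|H|\}$-bounded by the previous paragraph, yielding an \emph{explicit} bound in terms of $|G|$ and $|H|$ alone — in contrast to Moravec's bound, which depends on the positive solution of the restricted Burnside problem. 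A slightly more structural variant is available: with $N=\ker(\lambda)\cap\ker(\mu)$ central in $\eta(G,H)$ (Lemma \ref{lem.quo}) and $n=|[G,H^{\varphi}]/N|\leqslant|G|\cdot|H|$, one has $\alpha^{n}\in N$ for every $\alpha\in[G,H^{\varphi}]$, so $\exp([G,H^{\varphi}])$ divides $n\cdot\exp(N)$, and $\exp(N)$ is in turn bounded since $N$ is a finite abelian subgroup of $[G,H^{\varphi}]$.

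The only substantive ingredient is Theorem B (hence, ultimately, Wiegold's quantitative form of Neumann's theorem through Theorem A); everything else is bookkeeping with \eqref{eq:decomposition} and with the homomorphisms $\lambda,\mu$. I therefore do not anticipate a real obstacle; the single point requiring care is to make sure that every quantity involved — in particular the index $n$ — is bounded purely in terms of $|G|$ and $|H|$, which is exactly what Corollary \ref{cor.m-bound} supplies.
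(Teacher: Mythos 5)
Your argument is correct, but it follows a genuinely different route from the paper's. The paper proves this theorem \emph{without} invoking Theorem B: since $G$ and $H$ are finite, the index $n=|[G,H^{\varphi}]:\ker(\lambda)\cap\ker(\mu)|$ is finite and divides $|G|\cdot|H|$ outright (the quotient embeds in $[G,H]\times[H,G]\leqslant G\times H$), so by the decomposition \eqref{eq:decomposition} the central subgroup $N=\ker(\lambda)\cap\ker(\mu)$ of Lemma \ref{lem.quo} has finite index $|G|\cdot|H|\cdot n$ in $\eta(G,H)$; Schur's theorem \cite[10.1.4]{Rob} then makes $\eta(G,H)'$ finite with exponent dividing $|G|\cdot|H|\cdot n$, and since $[G,H^{\varphi}]\leqslant\eta(G,H)'$ this yields in one stroke both the finiteness of the tensor product (an independent, Schur-only proof of Ellis' theorem) and the polynomial exponent bound $\exp([G,H^{\varphi}])\mid |G|\cdot|H|\cdot n\leqslant(|G|\cdot|H|)^{2}$. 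You instead feed $m\leqslant|G|\cdot|H|$ into Theorem B and then bound the exponent by the order (or by $n\cdot\exp(N)$). This is perfectly valid, since the statement only asks for a $\{|G|,|H|\}$-bound, and there is no circularity because Theorems A and B do not depend on this result; but it buys a much weaker, exponential bound of the shape $(|G|\cdot|H|)^{(|G|\cdot|H|)^{2}}$, and it routes everything through Wiegold's quantitative theorem, whereas the paper's stated purpose here (see the discussion of Moravec's result before the theorem and Remark \ref{rem.finite}) is precisely an explicit, elementary exponent bound obtained from the structure of $\eta(G,H)$ and Schur's theorem alone. Your ``structural variant'' ($\alpha^{n}\in N$ central, so the exponent divides $n\cdot\exp(N)$) is also correct but still needs Theorem B to bound $\exp(N)$; the paper's argument avoids that dependency entirely.
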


\begin{proof}
By Lemma \ref{lem.quo}, $\ker(\mu) \cap \ker(\lambda)$ is a central subgroup of $\eta(G,H)$. Set $n = [[G,H^{\varphi}]:\ker(\mu) \cap \ker(\lambda)]$. Note that $n$ divides $|G|\cdot |H|$, because $[G,H] \leqslant G$ and $[H,G] \leqslant H$. Since $|\eta(G,H)/(\ker(\mu) \cap \ker(\lambda))| = |G|\cdot |H|\cdot n$, it follows that the derived subgroup $\eta(G,H)'$ is finite and $\exp(\eta(G,H)')$ divides $|G| \cdot |H| \cdot n$ (Schur's theorem \cite[10.1.4]{Rob}). In particular, the non-abelian tensor product $[G,H^{\varphi}]$ is finite and $\exp([G,H^{\varphi}])$ divides $|G| \cdot |H| \cdot n$. The proof is complete.   
\end{proof}

\begin{rem} \label{rem.finite}
Since the proof of the above result is based on the general structure of $\eta(G,H)$ (cf. \cite{Nak}) and on Schur's theorem \cite[10.1.4]{Rob}, it becomes evident that it provides only a crude bound to both, the order and the exponent of the non-abelian tensor product $[G,H^{\varphi}]$. However, the advantages of these results are the explicit limits and the elementary proofs (without using homological methods). See \cite{M} for more details. Recently, other proofs of this result which are of non-homological nature have appeared (see for instance \cite{BNR,LT,T}).    
\end{rem}

The remainder of this section will be devoted to obtain finiteness conditions for the non-abelian tensor square of groups. 

\begin{lem} \label{lem.abelianization}  \cite[Theorem C, (a)]{BNR}
Let $G$ be a group with finitely generated abelianization. Assume that the diagonal subgroup   $\Delta(G)$ is periodic. Then the abelianization $G^{ab}$ is finite. Moreover, $G^{ab}$ is isomorphic to some subgroup of  $\Delta(G)$. 
\end{lem}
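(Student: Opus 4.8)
The plan is to establish the two assertions separately, relying only on the functoriality of the assignment $G \mapsto \nu(G)$ (and its restriction $G\mapsto\Delta(G)$) together with the known structural fact that $\Delta(G)$ is a homomorphic image of Whitehead's quadratic functor $\Gamma(G^{ab})$.

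First I would prove that $G^{ab}$ is finite. Since $G^{ab}$ is finitely generated, if its torsion-free rank were positive then $G^{ab}$, and hence $G$, would admit $\mathbb{Z}$ as a quotient; let $\pi\colon G\twoheadrightarrow\mathbb{Z}$ be such an epimorphism. By functoriality, $\pi$ induces a homomorphism $\nu(\pi)\colon\nu(G)\to\nu(\mathbb{Z})$ which, since it carries each generator $[g,g^{\varphi}]$ of $\Delta(G)$ onto the corresponding generator $[\pi(g),\pi(g)^{\varphi}]$ of $\Delta(\mathbb{Z})$, restricts to an epimorphism $\Delta(G)\twoheadrightarrow\Delta(\mathbb{Z})$. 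But $\Delta(\mathbb{Z})$ coincides with the non-abelian tensor square $\mathbb{Z}\otimes\mathbb{Z}$, which is infinite cyclic --- contradicting the hypothesis that $\Delta(G)$ is periodic. Hence $G^{ab}$ has torsion-free rank $0$, and being finitely generated it is finite.

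Next I would show $G^{ab}$ embeds into $\Delta(G)$. With $G^{ab}$ now finite, $\Gamma(G^{ab})$ is finite, so $\Delta(G)$ --- a homomorphic image of it --- is a finite abelian group. The projection $G\twoheadrightarrow G^{ab}$ induces, again by functoriality, an epimorphism $\Delta(G)\twoheadrightarrow\Delta(G^{ab})$; since $\Delta(G)$ is finite abelian, every quotient of $\Delta(G)$ is isomorphic to a subgroup of $\Delta(G)$, so it is enough to embed $G^{ab}$ into $\Delta(G^{ab})$. Writing $G^{ab}=\bigoplus_{i=1}^{s}\langle e_i\rangle$ with $e_i$ of order $n_i$, a direct computation in the (abelian) tensor square $G^{ab}\otimes G^{ab}$ gives
\[
  \Delta(G^{ab})=\langle a\otimes a \mid a\in G^{ab}\rangle = \Bigl(\bigoplus_{i}\langle e_i\otimes e_i\rangle\Bigr)\oplus\Bigl(\bigoplus_{i<j}\langle e_i\otimes e_j + e_j\otimes e_i\rangle\Bigr),
\]
in which $\langle e_i\otimes e_i\rangle\cong\mathbb{Z}/n_i$. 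Therefore $\bigoplus_i\langle e_i\otimes e_i\rangle$ is a subgroup of $\Delta(G^{ab})$ isomorphic to $G^{ab}$, and composing the embeddings yields $G^{ab}\hookrightarrow\Delta(G^{ab})\hookrightarrow\Delta(G)$.

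The group-theoretic manipulations here are elementary; the main point to secure is the structural input on $\Delta$ --- that it is an epimorphic image of $\Gamma(G^{ab})$ (which is what makes $\Delta(G)$ finite once $G^{ab}$ is) and the explicit identification of $\Delta$ of a finite abelian group. I expect the only genuinely delicate step to be the one in which the embedding of $G^{ab}$ into the quotient $\Delta(G^{ab})$ of $\Delta(G)$ is lifted to an embedding into $\Delta(G)$ itself: this uses the finiteness of $\Delta(G)$, which is exactly the place where both hypotheses --- $G^{ab}$ finitely generated and $\Delta(G)$ periodic --- are combined.
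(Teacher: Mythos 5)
Your argument is correct, and it is worth noting that this paper does not actually prove the lemma: it is quoted verbatim from \cite[Theorem C(a)]{BNR}, so there is no internal proof to match against. Your reconstruction is a legitimate one and stays within the toolkit of that literature: step one (functoriality of $\nu$, hence an epimorphism $\Delta(G)\twoheadrightarrow\Delta(\mathbb{Z})\cong\mathbb{Z}$ whenever $G^{ab}$ has positive torsion-free rank, contradicting periodicity) is sound, although you could shortcut it by mapping directly onto $\Delta(G^{ab})$ and observing that a free summand $\langle e\rangle$ of $G^{ab}$ contributes the infinite-order element $e\otimes e$; step two correctly combines the Whitehead--Brown--Loday epimorphism $\Gamma(G^{ab})\twoheadrightarrow\Delta(G)$ (which gives finiteness and abelianness of $\Delta(G)$ once $G^{ab}$ is finite), the induced epimorphism $\Delta(G)\twoheadrightarrow\Delta(G^{ab})$, the fact that a quotient of a finite abelian group is isomorphic to a subgroup, and the explicit direct-sum computation of $\Delta(A)\leq A\otimes_{\mathbb{Z}}A$ for $A$ finite abelian, which indeed exhibits $\bigoplus_i\langle e_i\otimes e_i\rangle\cong A$. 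The reduction to the abelianization and the analysis of the diagonal of $A\otimes_{\mathbb{Z}}A$ is also the spirit of the proof in \cite{BNR}, so your route is essentially the expected one rather than a genuinely different method. One small quibble with your closing remark: the two hypotheses are not really ``combined'' in the finiteness of $\Delta(G)$ --- periodicity of $\Delta(G)$ is used only to force $G^{ab}$ to be finite, and after that the finiteness of $\Delta(G)$ follows from the finiteness of $G^{ab}$ alone via $\Gamma(G^{ab})$.
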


For the reader's convenience we restate Corollary C: \\ 

\noindent {\bf Corollary C.}{ Let $G$ be a group. Suppose that there exist exactly $m$ tensors in $\nu(G)$. Then, 
\begin{itemize}
\item[(a)] The non-abelian tensor square $[G,G^{\varphi}]$ is finite, with $m$-bounded order. More specifically, $|[G,G^{\varphi}]| \leqslant m^{m \cdot n}$, where $n$ is the order of the derived subgroup $G'$;
\item[(b)] Additionally, if the abelianization $G^{ab}$ is finitely generated,  then the group $G$ is finite, with $m$-bounded order.   
\end{itemize}
}
\begin{proof}
\noindent (a). Applying Theorem B to $[G,G^\varphi]$ we deduce that the order of the non-abelian tensor square is finite with $m$-bounded order. Arguing as in the proof of Theorem B we conclude that $|[G,G^{\varphi}]|\leq m^{m\cdot n}$. \\ 

\noindent (b). By the previous item, the non-abelian tensor square  $[G,G^{\varphi}]$ and the derived subgroup $G'$ are finite with $m$-bounded orders. Now, it suffices to prove that the abelianization is finite with $m$-bounded order. By Lemma \ref{lem.abelianization}, the abelianization $G^{ab}$ is isomorphic to a subgroup of the diagonal subgroup $\Delta(G)$. Since $\Delta(G) \leqslant [G,G^{\varphi}]$, it follows that $\Delta(G)$ is finite with $m$-bounded order. The proof is complete. 
\end{proof}

It should be noted that the next result makes evident an interesting relation between the constructions $\nu(G)$ and the non-abelian tensor square $G \otimes G$. More precisely, we collect a list of equivalences which give a relation between the set of commutators of the group $\nu(G)$ and the set of tensors $T_{\otimes}(G)$. 

\begin{thm} \label{thm.finiteness}
Let $G$ be a group. The following properties are equivalents.
\begin{itemize}
\item[(a)] $\nu(G)$ is a BFC-group; 
\item[(b)] The set of all commutators $\{[\alpha,\beta] \mid \alpha,\beta \in \nu(G)\}$ is finite; 
\item[(c)] The derived subgroup $\nu(G)'$ is finite;
\item[(d)] The non-abelian tensor square $[G,G^{\varphi}]$ is finite;
\item[(e)] $G$ is a BFC-group and $G^{ab} \otimes_{\mathbb{Z}}G^{ab}$ is finite;
\item[(f)] The set of tensors $T_{\otimes}(G) = \{[g,h^{\varphi}] \mid g,h \in G\} \subseteq \nu(G)$ is finite. 
\end{itemize}
\end{thm}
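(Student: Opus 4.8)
The plan is to establish the cycle of implications $(a)\Rightarrow(b)\Rightarrow(c)\Rightarrow(d)\Rightarrow(e)\Rightarrow(f)\Rightarrow(a)$, using the structural decomposition $\nu(G) = ([G,G^{\varphi}]\cdot G)\cdot G^{\varphi}$ together with the maps $\lambda,\mu$ and the centrality of $\ker(\lambda)\cap\ker(\mu)$ recorded in Lemma \ref{lem.quo}. The implications $(a)\Rightarrow(b)$ and $(b)\Rightarrow(c)$ are exactly Neumann's theorem \cite[14.5.11]{Rob} applied to $\nu(G)$, so these require no new work. For $(c)\Rightarrow(d)$ one simply observes that $[G,G^{\varphi}]=[G,G^{\varphi}]\cap\nu(G)'$ need not hold verbatim, but $[G,G^{\varphi}]$ is generated by the tensors $[g,h^{\varphi}]$, each of which is a commutator in $\nu(G)$; hence $[G,G^{\varphi}]\leqslant\nu(G)'$, and finiteness of $\nu(G)'$ forces finiteness of the non-abelian tensor square.

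The substantive steps are $(d)\Rightarrow(e)$ and $(f)\Rightarrow(a)$. For $(d)\Rightarrow(e)$: assume $[G,G^{\varphi}]$ is finite. Since $G'\cong[G,G]$ is an epimorphic image of $[G,G^{\varphi}]$ under $\lambda$, $G'$ is finite, so $G$ is a BFC-group by Neumann's theorem. For the factor $G^{ab}\otimes_{\mathbb Z}G^{ab}$, I would use the standard identification (e.g. from \cite[Proposition 2.4]{BL} or the remarks in \cite{BNR}) of $G^{ab}\otimes_{\mathbb Z}G^{ab}$ as a quotient of $[G,G^{\varphi}]$ — concretely, the abelianized tensor square — so its finiteness is immediate. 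For $(e)\Rightarrow(f)$: given $G$ BFC (so $G'$ finite of order $n$, say) and $G^{ab}\otimes_{\mathbb Z}G^{ab}$ finite, I would show $[G,G^{\varphi}]$ is finite by exhibiting it as an extension with finite kernel and finite image: the commutator map yields a central series argument, or more directly one invokes the exact sequence relating $[G,G^{\varphi}]$, the Whitehead functor, and $G^{ab}\otimes_{\mathbb Z}G^{ab}$ — finiteness of both $G'$ and the symmetric-square-type quotient pins down $|[G,G^{\varphi}]|$. Finiteness of $[G,G^{\varphi}]$ then gives finitely many tensors, which is $(f)$.

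The main obstacle I anticipate is $(f)\Rightarrow(a)$: from only finitely many tensors in $\nu(G)$ one must conclude $\nu(G)$ is a BFC-group, even though $\nu(G)$ itself is typically infinite (the Pr\"ufer example shows the hypothesis cannot force finiteness of $\nu(G)$). The route here is: finitely many tensors gives, via Theorem B applied with $G=H$, that $[G,G^{\varphi}]$ is finite; hence $G'$ is finite and $G$ is a BFC-group, so $\nu(G)/[G,G^{\varphi}]$ — which is an extension of $G^{ab}$ by $G^{ab}$ built from $G\cdot G^{\varphi}$ modulo the tensor square — has finite derived subgroup as well. Combining, $\nu(G)'$ lies in a finite-by-(finite derived subgroup) situation; one checks $\nu(G)'$ is generated by $[G,G^{\varphi}]$ together with the images of $G'$ and $(G')^{\varphi}$, all finite, and that conjugation keeps this inside a fixed finite set, yielding $\nu(G)'$ finite, hence $\nu(G)$ BFC. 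The delicate point is controlling the part of $\nu(G)'$ lying outside $[G,G^{\varphi}]$; I would handle it by passing to $\nu(G)/[G,G^{\varphi}]\cong$ (a subgroup of) $G^{ab}\times G^{ab}$-type object and noting its commutator structure is governed by $G^{ab}\otimes_{\mathbb Z}G^{ab}$, which finiteness of the tensors already controls through the epimorphism onto it.
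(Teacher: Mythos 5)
Most of your cycle is sound and runs close to the paper's own argument: $(a)\Leftrightarrow(b)\Leftrightarrow(c)$ is Neumann's theorem applied to $\nu(G)$; $(c)\Rightarrow(d)$ holds because every tensor is a commutator of $\nu(G)$; $(d)\Rightarrow(e)$ follows from the epimorphism $\lambda\colon [G,G^{\varphi}]\to G'$ and the natural epimorphism of $[G,G^{\varphi}]$ onto $G^{ab}\otimes_{\mathbb Z}G^{ab}$; and your $(f)\Rightarrow(a)$ is essentially what the paper does, namely combine the finiteness of $[G,G^{\varphi}]$ (Theorem~B, or \cite[Theorem A]{BNR}) with the decomposition (\ref{eq:decomposition}). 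One small correction there: $\nu(G)/[G,G^{\varphi}]$ is isomorphic to $G\times G$, not ``an extension of $G^{ab}$ by $G^{ab}$''; this slip is harmless, since all you need is that its derived subgroup $G'\times G'$ is finite, which follows because $G'$ is a quotient of the finite group $[G,G^{\varphi}]$, and then $\nu(G)'$ is (subgroup of finite)-by-finite, hence finite.

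The genuine gap is your step $(e)\Rightarrow(f)$, i.e.\ the direction ``$G'$ and $G^{ab}\otimes_{\mathbb Z}G^{ab}$ finite $\Rightarrow$ $[G,G^{\varphi}]$ finite''. The exact sequences you allude to are $\Gamma(G^{ab})\to G\otimes G\to G\wedge G\to 1$ and $1\to M(G)\to G\wedge G\to G'\to 1$, so besides $G'$ you must control both the image of the Whitehead term and the Schur multiplier $M(G)$, and neither is ``immediate'' from the hypotheses: finiteness of $G^{ab}\otimes_{\mathbb Z}G^{ab}$ does not by itself bound $\Gamma(G^{ab})$ (e.g.\ $\Gamma(C_{2^{\infty}})\cong C_{2^{\infty}}$ while $C_{2^{\infty}}\otimes_{\mathbb Z}C_{2^{\infty}}=0$), and the assertion that finiteness of $G'$ and $G^{ab}\otimes_{\mathbb Z}G^{ab}$ ``pins down $|[G,G^{\varphi}]|$'' is false, since the multiplier contributes independently. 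This implication is precisely the nontrivial content behind the equivalence $(d)\Leftrightarrow(e)$, which the paper does not reprove but quotes as \cite[Corollary 1.1]{BNR} (just as it quotes \cite[Theorem A]{BNR} for $(d)\Leftrightarrow(f)$). To make your proposal complete you should either cite those results, as the paper does (cf.\ also \cite{BFM}), or supply an actual argument that $G\wedge G$ (equivalently $M(G)$) and the image of $\Gamma(G^{ab})$ are finite under hypothesis $(e)$.
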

\begin{proof}
The equivalences $(a) \Leftrightarrow (b) \Leftrightarrow (c)$ are immediate consequences of Newmann's result \cite[14.5.11]{Rob}. The equivalences \ $(d) \Leftrightarrow (e)$ and $(d) \Leftrightarrow (f)$ were proved in \cite[Corollary 1.1]{BNR} and \cite[Theorem A]{BNR}, respectively. It is clear that $(b)$ implies $(f)$. Finally, if part $(f)$ holds then, from the decomposition (\ref{eq:decomposition}) and itens $(d)$, $(e)$, we obtain $(a)$. The proof is complete. 
\end{proof}

\noindent{\bf Acknowledgements.} The authors wish to thank I. Snopche for interesting discussions. This work was partially supported by FAPDF - Brazil, Grant: 0193.001344/2016.


\begin{thebibliography}{10}

\bibitem{BNR} R. Bastos, I.\,N. Nakaoka and N.\,R. Rocco, {\it Finiteness conditions for the non-abelian tensor product of groups}, Monatsh. Math. {\bf 187} (2018) pp. 603--615.

\bibitem{BR1} R. Bastos and N.\,R. Rocco, {\it The non-abelian tensor square of residually finite groups}, Monatsh. Math., {\bf 183} (2017) pp. 61--69.   

\bibitem{BR2} R. Bastos and N.\,R. Rocco, {\it Non-abelian tensor product of residually finite groups}, S\~ao Paulo J. Math. Sci., {\bf 11} (2017) pp. 361--369.   

\bibitem{BFM} R.\,D. Blyth, F. Fumagalli and M. Morigi, {\it Some structural results on the non-abelian tensor square of groups}, J. Group Theory, {\bf 13} (2010) pp. 83--94.

\bibitem{BL} R. Brown, and J.-L. Loday, {\it Van Kampen theorems for diagrams of spaces}, Topology, {\bf 26} (1987) pp. 311--335.

\bibitem{Ellis} G. Ellis, {\em The non-abelian tensor product of finite groups is finite}, J. Algebra, {\bf 111} (1987) pp. 203--205.

\bibitem{EL} G. Ellis and F. Leonard, {\em Computing Schur multipliers and tensor products of finite groups},  Proc. Royal Irish Acad., {\bf 95A} (1995) pp. 137--147.

\bibitem{K} L.-C. Kappe, {\em Nonabelian tensor products of groups:  the commutator connection}, Proc. Groups St. Andrews 1997 at Bath, London Math. Soc. Lecture Notes, {\bf 261} (1999) 447--454.

\bibitem{LT} M. Ladra and V.\,Z. Thomas, {\em Two generalizations of the nonabelian tensor product}, J. Algebra, {\bf 369} (2012) pp. 96--113.  

\bibitem{M} P. Moravec, {\em The exponents of nonabelian tensor products of groups}, J. Pure Appl. Algebra, {\bf 212} (2008)  pp. 1840--1848.

\bibitem{Nak} I.\,N. Nakaoka,  {\it Non-abelian tensor products of solvable groups}, J. Group Theory, {\bf 3} (2000) pp. 157--167.

\bibitem{NR01} I.\,N. Nakaoka and N.\,R. Rocco, Nilpotent actions on non-abelian tensor products of groups, Matem\'atica Contempor\^anea, {\bf 21} (2001) pp. 223--238. 

\bibitem{NR} I.\,N. Nakaoka and N.\,R. Rocco, {\em A survey of non-abelian tensor products of groups and related constructions}, Bol. Soc. Paran. Mat., {\bf 30} (2012) pp. 77--89.

\bibitem{NP} M. Parvizi and P. Niromand, {\it On the structure of groups whose exterior or tensor square is a $p$-group}, J. Algebra, {\bf 352} (2012) pp. 347--353.

\bibitem{Rob} D.\,J.\,S. Robinson,
\textit{A course in the theory of groups}, 2nd edition, Springer-Verlag, New York, 1996.

\bibitem{NR1} N.\,R. Rocco, {\it On a construction related to the non-abelian 
tensor square of a group}, Bol. Soc. Brasil Mat., {\bf 22} (1991) pp. 63--79.

\bibitem{NR2} N.\,R. Rocco, {\it A presentation for a crossed embedding of finite solvable groups}, Comm. Algebra {\bf 22} (1994) pp. 1975--1998.

\bibitem{T} V.\,Z. Thomas, {\em The non-abelian tensor product of finite groups is finite: a Homology-free proof}, Glasgow Math. J. {\bf 52}, (2010) pp. 473--477.

\bibitem{Vis} M.\,P. Visscher, On the nilpotency class and solvability lenght of nonabelian tensor products of groups, Arch. Math. {\bf 73} (1999)  pp. 161--171.  


\bibitem{W} J. Wiegold, Groups with boundedly finite classes of conjugate elements,   Proc. R. Soc. Lond. Ser. A Math. Phys. Eng. Sci., {\bf 238} (1957) pp. 389--401. 

\bibitem{ze1} E. Zel'manov,
{\it The solution of the restricted Burnside problem for groups of odd exponent}, Math. USSR Izv., {\bf 36} (1991) pp. 41--60.

\bibitem{ze2} E.\,I. Zel'manov,
{\it The solution of the restricted Burnside problem for 2-groups}, Math. Sb., {\bf 182} (1991) pp. 568--592.
\end{thebibliography}
\end{document}